\newcommand{\mymod}[3]{#1 \equiv #2 \kern -0.5em \pmod{#3}}
\newcommand{\mynotmod}[3]{#1 \not \equiv #2 \kern -0.6em \pmod{#3}}
\theoremstyle{plain}
\newtheorem{theorem}{Theorem}[section]
\newtheorem{corollary}[theorem]{Corollary}
\theoremstyle{remark}
\theoremstyle{definition}
\title{Third-order Jacobsthal Generalized Quaternions}
\author[G. Cerda-Morales]{\scriptsize GAMALIEL CERDA-MORALES$^{1}$}
\date{}
\begin{document}
\maketitle

\vspace{-20pt}
\begin{center}
{\footnotesize $^1$Instituto de Matem\'aticas, Pontificia Universidad Cat\'olica de Valpara\'iso, \\
Blanco Viel 596, Valpara\'iso, Chile.\\
E-mails: gamaliel.cerda@usm.cl / gamaliel.cerda.m@mail.pucv.cl
}\end{center}

\hrule

\begin{abstract}
In this paper, the third-order Jacobsthal generalized quaternions are introduced. We use the well-known identities related to the third-order Jacobsthal and third-order Jacobsthal-Lucas numbers to obtain the relations regarding these quaternions. Furthermore, the third-order Jacobsthal generalized quaternions are classified by considering the special cases of quaternionic units. We derive the relations between third-order Jacobsthal and third-order Jacobsthal-Lucas generalized quaternions.
\end{abstract}

\medskip
\noindent
\subjclass{\footnotesize {\bf Mathematical subject classification:} 
Primary: 11B37; Secondary: 11R52, 11Y55.}

\medskip
\noindent
\keywords{\footnotesize {\bf Key words:} Third-order Jacobsthal number, generalized quaternion, split quaternion, semi-quaternion, third-order Jacobsthal and third-order Jacobsthal-Lucas generalized quaternion.}
\medskip

\hrule

\section{Introduction and Preliminaries}\label{sec:1}
\setcounter{equation}{0}

Recently, the topic of number sequences in real normed division algebras has attracted the attention of several researchers. It is worth noticing that there are exactly four real normed division algebras: real numbers ($\Bbb{R}$), complex numbers ($\Bbb{C}$), quaternions ($\Bbb{H}$) and octonions ($\Bbb{O}$). In \cite{Bae} Baez gives a comprehensive discussion of these algebras.

The real quaternion algebra $$\Bbb{H}=\{q=q_{r}+q_{i}\textbf{i}+q_{j}\textbf{j}+q_{k}\textbf{k}:\ q_{r},q_{s}\in \Bbb{R},\ s=i,j,k\}$$ is a 4-dimensional $\Bbb{R}$-vector space with basis $\{\textbf{1}\simeq e_{0},\textbf{i}\simeq e_{1},\textbf{j}\simeq e_{2},\textbf{k}\simeq e_{3}\}$ satisfying multiplication rules $q_{r}\textbf{1}=q_{r}$, $e_{1}e_{2}=-e_{2}e_{1}=e_{3}$, $e_{2}e_{3}=-e_{3}e_{2}=e_{1}$ and $e_{3}e_{1}=-e_{1}e_{3}=e_{2}$.

There has been an increasing interest on quaternions and octonions that play an important role in various areas such as computer sciences, physics, differential geometry, quantum physics, signal, color image processing and geostatics (for more details, see \cite{Ad,Car,Go1,Go2,Ko1,Ko2}).

A variety of new results on Fibonacci-like quaternion and octonion numbers can be found in several papers \cite{Cer,Cim1,Cim2,Hal1,Hal2,Hor1,Hor2,Iye,Ke-Ak,Szy-Wl}. The origin of the topic of number sequences in division algebra can be traced back to the works by Horadam in \cite{Hor1} and by Iyer in \cite{Iye}. In this sense, Horadam \cite{Hor1} defined the quaternions with the classic Fibonacci and Lucas number components as
\[
QF_{n}=F_{n}+F_{n+1}\textbf{i}+F_{n+2}\textbf{j}+F_{n+3}\textbf{k}\ \ (F_{n}\textbf{1}=F_{n})
\]
and
\[
QL_{n}=L_{n}+L_{n+1}\textbf{i}+L_{n+2}\textbf{j}+L_{n+3}\textbf{k}\ \ (L_{n}\textbf{1}=L_{n}),
\]
respectively, where $F_{n}$ and $L_{n}$ are the $n$-th classic Fibonacci and Lucas numbers, respectively, and the author studied the properties of these quaternions. Several interesting and useful extensions of many of the familiar quaternion
numbers (such as the Fibonacci and Lucas quaternions \cite{Aky,Hal1,Hor1}, Pell quaternion \cite{Ca,Cim1} and Jacobsthal quaternions \cite{Szy-Wl} have been considered by several authors.

After the work of Hamilton, James Cockle introduced the set of split quaternions which can be represented as
$$\Bbb{H}_{(1,-1)}=\{q=q_{r}+q_{i}e_{1}+q_{j}e_{2}+q_{k}e_{3}:\ q_{r},q_{s}\in \Bbb{R},\ s=i,j,k\}$$
where $e_{1}^{2}=-\textbf{1}$, $e_{2}^{2}=e_{3}^{2}=\textbf{1}$ and $e_{1}e_{2}e_{3}=\textbf{1}$. Note that $e_{1}e_{2}=e_{3}=-e_{2}e_{1}$,  $e_{2}e_{3}=- e_{1}=-e_{3}e_{2}$ and  $e_{3}e_{1}=e_{2}=-e_{1}e_{3}$. The set of split quaternions is also noncommutative. Unlike quaternion algebra, the set of split quaternions contains zero divisors, nilpotent and nontrivial idempotent elements, \cite{Ku}. For more properties of the split quaternions the reader is refereed to \cite{Oz}.

The set of generalized quaternions which can be represented as
$$\Bbb{H}_{(\alpha,\beta)}=\{q=q_{r}+q_{i}e_{1}+q_{j}e_{2}+q_{k}e_{3}:\ q_{r},q_{s}\in \Bbb{R},\ s=i,j,k\},$$ where $e_{1}$, $e_{2}$ and $e_{3}$ are quaternionic units which satisfy the equalities
\begin{equation}\label{u}
\left\lbrace
\begin{aligned}
e_{1}^{2}&=-\alpha, \ e_{2}^{2}=-\beta, \ e_{3}^{2}=-\alpha\beta,\\
e_{1}e_{2}&=e_{3}=-e_{2}e_{1}, \ e_{2}e_{3}=\beta e_{1}=-e_{3}e_{2}, \ e_{3}e_{1}=\alpha e_{2}=-e_{1}e_{3},
\end{aligned}
\right.
\end{equation}
where $\alpha, \beta \in \mathbb{R}$. 

By choosing $\alpha$ and $\beta$, there are following special cases:
\begin{itemize}[noitemsep]
\item $\alpha=\beta=1$ is considered, then $\Bbb{H}_{(1,1)}$ is the algebra of real quaternions.
\item $\alpha=1,\ \beta=-1$ is considered, then $\Bbb{H}_{(1,-1)}$ is the algebra of split quaternions.
\item $\alpha=1,\ \beta=0$ is considered, then $\Bbb{H}_{(1,0)}$ is the algebra of semi-quaternions.
\item $\alpha=-1,\ \beta=0$ is considered, then $\Bbb{H}_{(-1,0)}$ is the algebra of split semi-quaternions.
\item $\alpha=\beta=0$ is considered, then $\Bbb{H}_{(0,0)}$ is the algebra of $\frac{1}{4}$-quaternions.
\end{itemize}

Pottman and Wallner provided a brief introduction of the generalized quaternions in \cite{Po-Wa}. Furthermore, in \cite{Ja-Ya}, Jafari and Yayl\i\ studied some algebraic properties of generalized quaternions and operations over them. A generalized quaternion $q$ is a sum of a scalar and a vector, called scalar part, $S_{q}=q_{r}$, and vector part $V_{q}=q_{i}e_{1}+q_{j}e_{2}+q_{k}e_{3}\in \mathbb{R}^{3}_{\alpha\beta}$. Therefore, $\Bbb{H}_{(\alpha,\beta)}$ forms a 4-dimensional real space which contains the real axis $\mathbb{R}$ and a 3-dimensional real linear space $E^{3}_{\alpha\beta}$, so that, $\Bbb{H}_{(\alpha,\beta)}=\mathbb{R}\bigoplus E^{3}_{\alpha\beta}$ (for more details, see \cite{Ja-Ya}).

\section{Third-order Jacobsthal Quaternions}
\setcounter{equation}{0}

The Jacobsthal numbers have many interesting properties and applications in many fields of science (see, e.g., \cite{Ba,Hor3}). The Jacobsthal numbers $J_{n}$ are defined by the recurrence relation
\begin{equation}\label{e1}
J_{0}=0,\ J_{1}=1,\ J_{n+1}=J_{n}+2J_{n-1},\ n\geq1.
\end{equation}
Another important sequence is the Jacobsthal-Lucas sequence. This sequence is defined by the recurrence relation
\begin{equation}\label{ec1}
j_{0}=2,\ j_{1}=1,\ j_{n+1}=j_{n}+2j_{n-1},\ n\geq1.
\end{equation}
(see, \cite{Hor3}).

In \cite{Cook-Bac} the Jacobsthal recurrence relation is extended to higher order recurrence relations and the basic list of identities provided by A. F. Horadam \cite{Hor3} is expanded and extended to several identities for some of the higher order cases. For example, the third order Jacobsthal numbers, $\{J_{n}^{(3)}\}_{n\geq0}$, and third order Jacobsthal-Lucas numbers, $\{j_{n}^{(3)}\}_{n\geq0}$, are defined by
\begin{equation}\label{e2}
J_{n+3}^{(3)}=J_{n+2}^{(3)}+J_{n+1}^{(3)}+2J_{n}^{(3)},\ J_{0}^{(3)}=0,\ J_{1}^{(3)}=J_{2}^{(3)}=1,\ n\geq0,
\end{equation}
and 
\begin{equation}\label{e3}
j_{n+3}^{(3)}=j_{n+2}^{(3)}+j_{n+1}^{(3)}+2j_{n}^{(3)},\ j_{0}^{(3)}=2,\ j_{1}^{(3)}=1,\ j_{2}^{(3)}=5,\ n\geq0,
\end{equation}
respectively.

The following properties given for third order Jacobsthal numbers and third order Jacobsthal-Lucas numbers play important roles in this paper (see \cite{Cer,Cook-Bac}). 
\begin{equation}\label{e4}
3J_{n}^{(3)}+j_{n}^{(3)}=2^{n+1},
\end{equation}
\begin{equation}\label{e5}
j_{n}^{(3)}-3J_{n}^{(3)}=2j_{n-3}^{(3)},
\end{equation}
\begin{equation}\label{ec5}
J_{n+2}^{(3)}-4J_{n}^{(3)}=\left\{ 
\begin{array}{ccc}
-2 & \textrm{if} & \mymod{n}{1}{3} \\ 
1 & \textrm{if} & \mynotmod{n}{1}{3}%
\end{array}%
\right. ,
\end{equation}
\begin{equation}\label{e6}
j_{n}^{(3)}-4J_{n}^{(3)}=\left\{ 
\begin{array}{ccc}
2 & \textrm{if} & \mymod{n}{0}{3} \\ 
-3 & \textrm{if} & \mymod{n}{1}{3}\\ 
1 & \textrm{if} & \mymod{n}{2}{3}%
\end{array}%
\right. ,
\end{equation}
\begin{equation}\label{e7}
j_{n+1}^{(3)}+j_{n}^{(3)}=3J_{n+2}^{(3)},
\end{equation}
\begin{equation}\label{e8}
j_{n}^{(3)}-J_{n+2}^{(3)}=\left\{ 
\begin{array}{ccc}
1 & \textrm{if} & \mymod{n}{0}{3} \\ 
-1 & \textrm{if} & \mymod{n}{1}{3} \\ 
0 & \textrm{if} & \mymod{n}{2}{3}%
\end{array}%
\right. ,
\end{equation}
\begin{equation}\label{e9}
\left( j_{n-3}^{(3)}\right) ^{2}+3J_{n}^{(3)}j_{n}^{(3)}=4^{n},
\end{equation}
\begin{equation}\label{e10}
\sum\limits_{k=0}^{n}J_{k}^{(3)}=\left\{ 
\begin{array}{ccc}
J_{n+1}^{(3)} & \textrm{if} & \mynotmod{n}{0}{3} \\ 
J_{n+1}^{(3)}-1 & \textrm{if} & \mymod{n}{0}{3}%
\end{array}%
\right. ,
\end{equation}
\begin{equation}\label{e11}
\sum\limits_{k=0}^{n}j_{k}^{(3)}=\left\{ 
\begin{array}{ccc}
j_{n+1}^{(3)}-2 & \textrm{if} & \mynotmod{n}{0}{3} \\ 
j_{n+1}^{(3)}+1 & \textrm{if} & \mymod{n}{0}{3}%
\end{array}%
\right. 
\end{equation}
and
\begin{equation}\label{e12}
\left( j_{n}^{(3)}\right) ^{2}-9\left( J_{n}^{(3)}\right)^{2}=2^{n+2}j_{n-3}^{(3)}.
\end{equation}

Using standard techniques for solving recurrence relations, the auxiliary equation, and its roots are given by 
$$x^{3}-x^{2}-x-2=0;\ x = 2,\ \textrm{and}\ x=\frac{-1\pm i\sqrt{3}}{2}.$$ 

Note that the latter two are the complex conjugate cube roots of unity. Call them $\omega_{1}$ and $\omega_{2}$, respectively. Thus the Binet formulas can be written as
\begin{equation}\label{b1}
J_{n}^{(3)}=\frac{2}{7}2^{n}-\frac{3+2i\sqrt{3}}{21}\omega_{1}^{n}-\frac{3-2i\sqrt{3}}{21}\omega_{2}^{n}
\end{equation}
and
\begin{equation}\label{b2}
j_{n}^{(3)}=\frac{8}{7}2^{n}+\frac{3+2i\sqrt{3}}{7}\omega_{1}^{n}+\frac{3-2i\sqrt{3}}{7}\omega_{2}^{n},
\end{equation}
respectively. Now, we use the notation
\begin{equation}\label{h1}
V_{n}^{(3)}=\frac{A\omega_{1}^{n}-B\omega_{2}^{n}}{\omega_{1}-\omega_{2}}=\left\{ 
\begin{array}{ccc}
2 & \textrm{if} & \mymod{n}{0}{3} \\ 
-3 & \textrm{if} & \mymod{n}{1}{3} \\ 
1& \textrm{if} & \mymod{n}{2}{3}
\end{array}%
\right. ,
\end{equation}
where $A=-3-2\omega_{2}$ and $B=-3-2\omega_{1}$. Furthermore, note that for all $n\geq0$ we have 
\begin{equation}
V_{n+2}^{(3)}=-V_{n+1}^{(3)}-V_{n}^{(3)},\ V_{0}^{(3)}=2\ \textrm{and}\ V_{1}^{(3)}=-3.
\end{equation}

From the Binet formulas (\ref{b1}), (\ref{b2}) and Eq. (\ref{h1}), we have
\begin{equation}\label{h2}
J_{n}^{(3)}=\frac{1}{7}\left(2^{n+1}-V_{n}^{(3)}\right)\ \textrm{and}\ j_{n}^{(3)}=\frac{1}{7}\left(2^{n+3}+3V_{n}^{(3)}\right).
\end{equation}

In \cite{Cer}, the author introduced the so-called third order Jacobsthal quaternions, which are a new class of quaternion sequences. They are defined by
\begin{equation}\label{eq:1}
JQ_{n}^{(3)}=\sum_{s=0}^{3}J_{n+s}^{(3)}e_{s}=J_{n}^{(3)}+\sum_{s=1}^{3}J_{n+s}^{(3)}e_{s},\ (J_{n}^{(3)}\textbf{1}=J_{n}^{(3)})
\end{equation}
where $J_{n}^{(3)}$ is the $n$-th third order Jacobsthal number, $e_{1}^{2}=e_{2}^{2}=e_{3}^{2}=-\textbf{1}$ and $e_{1}e_{2}e_{3}=-\textbf{1}$.

The main objective of this paper is to define third-order Jacobsthal generalized quaternions and obtain the relations related to these quaternions. (i.e., for split third-order Jacobsthal quaternions, third-order Jacobsthal semi-quaternions and split third-order Jacobsthal semi-quaternions).

\section{Third-order Jacobsthal Generalized Quaternions}
\setcounter{equation}{0}

The third-order Jacobsthal and third-order Jacobsthal-Lucas generalized quaternions have respectively the expressions of following forms
\begin{equation}\label{f1}
JQ_{\alpha,\beta,n}^{(3)}=\sum_{s=0}^{3}J_{n+s}^{(3)}e_{s}=J_{n}^{(3)}+\sum_{s=1}^{3}J_{n+s}^{(3)}e_{s},\ (J_{n}^{(3)}\textbf{1}=J_{n}^{(3)})
\end{equation}
and
\begin{equation}\label{f2}
jQ_{\alpha,\beta,n}^{(3)}=\sum_{s=0}^{3}j_{n+s}^{(3)}e_{s}=j_{n}^{(3)}+\sum_{s=1}^{3}j_{n+s}^{(3)}e_{s},\ (j_{n}^{(3)}\textbf{1}=j_{n}^{(3)}),
\end{equation}
where $J_{n}^{(3)}$ is the $n$-th third-order Jacobsthal number, $j_{n}^{(3)}$ is the $n$-th third-order Jacobsthal-Lucas number and
$e_{1}$, $e_{2}$ and $e_{3}$ are quaternionic units which satisfy the equalities
$$
\left\lbrace
\begin{aligned}
e_{1}^{2}&=-\alpha, \ e_{2}^{2}=-\beta, \ e_{3}^{2}=-\alpha\beta,\\
e_{1}e_{2}&=e_{3}=-e_{2}e_{1}, \ e_{2}e_{3}=\beta e_{1}=-e_{3}e_{2}, \ e_{3}e_{1}=\alpha e_{2}=-e_{1}e_{3}.
\end{aligned}
\right.
$$
Let us denote the sets of the third-order Jacobsthal and third-order Jacobsthal-Lucas generalized quaternions by $JQ_{\alpha,\beta}^{(3)}$ and $jQ_{\alpha,\beta}^{(3)}$ respectively and their natural basis by choosing $\alpha$ and $\beta$:
\begin{itemize}[noitemsep]
\item For $\alpha=\beta=1$, $JQ_{1,1}^{(3)}$ is the set of third-order Jacobsthal real quaternions and $jQ_{1,1}^{(3)}$ is the set of third-order Jacobsthal-Lucas real quaternions, \cite{Cer}.
\item For $\alpha=1,\ \beta=-1$, $JQ_{1,-1}^{(3)}$ is the set of split third-order Jacobsthal quaternions and $jQ_{1,-1}^{(3)}$ is the set of split third-order Jacobsthal-Lucas quaternions.
\item For $\alpha=1,\ \beta=0$, $JQ_{1,0}^{(3)}$ is the set of third-order Jacobsthal semi-quaternions.
\item For $\alpha=-1,\ \beta=0$, $JQ_{-1,0}^{(3)}$ is the set of split third-order Jacobsthal semi-quaternions.
\item For $\alpha=\beta=0$, $JQ_{0,0}^{(3)}$ is the set of third-order Jacobsthal $\frac{1}{4}$-quaternions.
\end{itemize}
Throughout this paper, we study on third-order Jacobsthal generalized quaternions $JQ_{\alpha,\beta}^{(3)}$ . Similar relations hold for third-order Jacobsthal-Lucas generalized quaternions $jQ_{\alpha,\beta}^{(3)}$. In the following we will study the important properties of the third-order Jacobsthal generalized quaternions and third-order Jacobsthal-Lucas generalized quaternions:
\begin{itemize}[noitemsep]
\item The sum and subtract of $JQ_{\alpha,\beta,n}^{(3)}$ and $jQ_{\alpha,\beta,n}^{(3)}$ is defined as 
\begin{equation}\label{s1}
JQ_{\alpha,\beta,n}^{(3)}\pm jQ_{\alpha,\beta,n}^{(3)}=\sum_{s=0}^{7}(J_{n+s}^{(3)}\pm j_{n+s}^{(3)})e_{s},
\end{equation}
where $JQ_{\alpha,\beta,n}^{(3)},jQ_{\alpha,\beta,n}^{(3)}\in \Bbb{H}_{(\alpha,\beta)}$. \\
Furthermore, we can be written as $JQ_{\alpha,\beta,n}^{(3)}=S_{JQ_{\alpha,\beta,n}^{(3)}}+V_{JQ_{\alpha,\beta,n}^{(3)}}$, where $S_{JQ_{\alpha,\beta,n}^{(3)}}=J_{n}^{(3)}$ and $V_{JQ_{\alpha,\beta,n}^{(3)}}=\sum_{s=1}^{7}J_{n+s}^{(3)}e_{s}$ are called the scalar and vector parts, respectively.
\item The multiplication of these quaternions are defined by
\begin{equation}\label{m1}
\begin{aligned}
JQ_{\alpha,\beta,n}^{(3)}\cdot jQ_{\alpha,\beta,n}^{(3)}&=J_{n}^{(3)}j_{n}^{(3)}-h\left(V_{JQ_{\alpha,\beta,n}^{(3)}},V_{jQ_{\alpha,\beta,n}^{(3)}}\right)\\
&\ \ +J_{n}^{(3)}V_{jQ_{\alpha,\beta,n}^{(3)}}+j_{n}^{(3)}V_{JQ_{\alpha,\beta,n}^{(3)}}+V_{JQ_{\alpha,\beta,n}^{(3)}}\times V_{jQ_{\alpha,\beta,n}^{(3)}},
\end{aligned}
\end{equation}
where 
\begin{align*}
h\left(V_{JQ_{\alpha,\beta,n}^{(3)}},V_{jQ_{\alpha,\beta,n}^{(3)}}\right)&=\alpha JQ_{\alpha,\beta,n+1}^{(3)}\cdot jQ_{\alpha,\beta,n+1}^{(3)}+\beta JQ_{\alpha,\beta,n+2}^{(3)}\cdot jQ_{\alpha,\beta,n+2}^{(3)}\\
&\ \ +\alpha\beta JQ_{\alpha,\beta,n+3}^{(3)}\cdot jQ_{\alpha,\beta,n+3}^{(3)}\end{align*}
and
\begin{align*}
V_{JQ_{\alpha,\beta,n}^{(3)}}\times V_{jQ_{\alpha,\beta,n}^{(3)}}&=\beta i\left(JQ_{\alpha,\beta,2}^{(3)}\cdot jQ_{\alpha,\beta,3}^{(3)}-JQ_{\alpha,\beta,3}^{(3)}\cdot jQ_{\alpha,\beta,2}^{(3)}\right)\\
&\ \ +\alpha j\left(JQ_{\alpha,\beta,3}^{(3)}\cdot jQ_{\alpha,\beta,1}^{(3)}-JQ_{\alpha,\beta,1}^{(3)}\cdot jQ_{\alpha,\beta,3}^{(3)}\right)\\
&\ \ + k\left(JQ_{\alpha,\beta,1}^{(3)}\cdot jQ_{\alpha,\beta,2}^{(3)}-JQ_{\alpha,\beta,2}^{(3)}\cdot jQ_{\alpha,\beta,1}^{(3)}\right).
\end{align*}
\item The conjugate of $JQ_{\alpha,\beta,n}^{(3)}$ is defined by 
\begin{equation}\label{s2}
\overline{JQ_{\alpha,\beta,n}^{(3)}}=S_{JQ_{\alpha,\beta,n}^{(3)}}-V_{JQ_{\alpha,\beta,n}^{(3)}}=J_{n}^{(3)}-\sum_{s=1}^{7}J_{n+s}^{(3)}e_{s}
\end{equation}
and this operation satisfies
\begin{equation}
\begin{aligned}
\overline{\overline{JQ_{\alpha,\beta,n}^{(3)}}}&=JQ_{\alpha,\beta,n}^{(3)},\\
\overline{JQ_{\alpha,\beta,n}^{(3)}+ jQ_{\alpha,\beta,n}^{(3)}}&=\overline{JQ_{\alpha,\beta,n}^{(3)}}+\overline{jQ_{\alpha,\beta,n}^{(3)}},\\
\overline{JQ_{\alpha,\beta,n}^{(3)} \cdot jQ_{\alpha,\beta,n}^{(3)}}&=\overline{jQ_{\alpha,\beta,n}^{(3)}}\cdot \overline{JQ_{\alpha,\beta,n}^{(3)}},
\end{aligned}
\end{equation}
for all $JQ_{\alpha,\beta,n}^{(3)}, jQ_{\alpha,\beta,n}^{(3)}\in \Bbb{H}_{(\alpha,\beta)}$.
\item The norm of an third-order Jacobsthal generalized quaternion, which agrees with the standard Euclidean norm on $\Bbb{R}^{4}$ is defined as 
\begin{equation}\label{s3}
Nr(JQ_{\alpha,\beta,n}^{(3)})=\Big\lvert\overline{JQ_{\alpha,\beta,n}^{(3)}}\cdot JQ_{\alpha,\beta,n}^{(3)}\Big\rvert=\Big\lvert JQ_{\alpha,\beta,n}^{(3)}\cdot \overline{JQ_{\alpha,\beta,n}^{(3)}}\Big\rvert.
\end{equation}
\item The inverse of $JQ_{\alpha,\beta,n}^{(3)}\neq 0$ is given by $\left(JQ_{\alpha,\beta,n}^{(3)}\right)^{-1}=\frac{\overline{JQ_{\alpha,\beta,n}^{(3)}}}{Nr(JQ_{\alpha,\beta,n}^{(3)})}$. From the above two definitions it is deduced that 
\begin{equation}\label{s4}
\left(JQ_{\alpha,\beta,n}^{(3)}\cdot jQ_{\alpha,\beta,n}^{(3)}\right)^{-1}=\left(jQ_{\alpha,\beta,n}^{(3)}\right)^{-1}\cdot \left(JQ_{\alpha,\beta,n}^{(3)}\right)^{-1}.
\end{equation}
\end{itemize}

Now, by the addition, subtraction and multiplication we can give the following theorems.
\begin{theorem}\label{th:1}
Let $JQ_{\alpha,\beta,n}^{(3)}$ be the third-order Jacobsthal generalized quaternion. For $n\geq 1$, the following relations hold
\begin{equation}\label{eq:1}
2JQ_{\alpha,\beta,n}^{(3)}+JQ_{\alpha,\beta,n+1}^{(3)}+JQ_{\alpha,\beta,n+2}^{(3)}=JQ_{\alpha,\beta,n+3}^{(3)},
\end{equation}
\begin{equation}\label{eq:2}
\begin{aligned}
JQ_{\alpha,\beta,n}^{(3)}-e_{1}JQ_{\alpha,\beta,n+1}^{(3)}&-e_{2}JQ_{\alpha,\beta,n+2}^{(3)}-e_{3}JQ_{\alpha,\beta,n+3}^{(3)}\\
&=\left\lbrace
\begin{array}{c}
(1+2\beta +10\alpha\beta)J_{n}^{(3)}+(3\beta+9\alpha\beta)J_{n+1}^{(3)}\\
+(\alpha+2\beta+9\alpha\beta)J_{n+2}^{(3)}
\end{array}
\right\rbrace,
\end{aligned}
\end{equation}
where $\alpha$ and $\beta$ are real numbers and $J_{n}^{(3)}$ is the $n$-th third-order Jacobsthal number.
\end{theorem}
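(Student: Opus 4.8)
The plan is to reduce both identities to elementary componentwise manipulations, using only the defining recurrence (\ref{e2}) for the third-order Jacobsthal numbers together with the multiplication table (\ref{u}) for the quaternionic units.

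For (\ref{eq:1}) I would write each quaternion as $JQ_{\alpha,\beta,m}^{(3)}=\sum_{s=0}^{3}J_{m+s}^{(3)}e_{s}$ and compare the coefficient of $e_{s}$ on the two sides. On the left it is $2J_{n+s}^{(3)}+J_{n+1+s}^{(3)}+J_{n+2+s}^{(3)}$, which by (\ref{e2}) applied with index $n+s$ is exactly $J_{n+3+s}^{(3)}$, the coefficient of $e_{s}$ on the right. Since this holds for $s=0,1,2,3$, the identity follows; note that it does not involve $\alpha,\beta$ at all, being a relation among the Jacobsthal components only.

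For (\ref{eq:2}) the plan is: (i) expand the three products $e_{1}JQ_{\alpha,\beta,n+1}^{(3)}$, $e_{2}JQ_{\alpha,\beta,n+2}^{(3)}$, $e_{3}JQ_{\alpha,\beta,n+3}^{(3)}$ term by term, using $e_{1}^{2}=-\alpha$, $e_{2}^{2}=-\beta$, $e_{3}^{2}=-\alpha\beta$ together with $e_{1}e_{2}=e_{3}=-e_{2}e_{1}$, $e_{2}e_{3}=\beta e_{1}=-e_{3}e_{2}$, $e_{3}e_{1}=\alpha e_{2}=-e_{1}e_{3}$; (ii) subtract and collect coefficients of $1,e_{1},e_{2},e_{3}$. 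I expect the coefficients of $e_{1},e_{2},e_{3}$ to cancel identically --- these cancellations being forced by the unit relations alone, with no appeal to the Jacobsthal recurrence --- leaving the scalar $J_{n}^{(3)}+\alpha J_{n+2}^{(3)}+\beta J_{n+4}^{(3)}+\alpha\beta J_{n+6}^{(3)}$. (iii) Then I would iterate (\ref{e2}) to obtain $J_{n+4}^{(3)}=2J_{n+2}^{(3)}+3J_{n+1}^{(3)}+2J_{n}^{(3)}$ and $J_{n+6}^{(3)}=9J_{n+2}^{(3)}+9J_{n+1}^{(3)}+10J_{n}^{(3)}$, substitute these in, and regroup by $J_{n}^{(3)},J_{n+1}^{(3)},J_{n+2}^{(3)}$, which should produce the coefficients $1+2\beta+10\alpha\beta$, $3\beta+9\alpha\beta$, $\alpha+2\beta+9\alpha\beta$ asserted on the right-hand side.

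The one step needing care is the bookkeeping in parts (i)--(ii) of the second identity: tracking the $\alpha$- and $\beta$-weights and the minus signs coming from the non-commuting units, and verifying that the vector part genuinely vanishes in all three slots. Everything after that is a mechanical application of the recurrence, so I do not anticipate any conceptual obstacle.
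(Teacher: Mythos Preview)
Your proposal is correct and follows essentially the same route as the paper: for (\ref{eq:1}) a componentwise application of the recurrence (\ref{e2}), and for (\ref{eq:2}) an expansion via the multiplication table (\ref{u}) to the scalar $J_{n}^{(3)}+\alpha J_{n+2}^{(3)}+\beta J_{n+4}^{(3)}+\alpha\beta J_{n+6}^{(3)}$, followed by the reductions of $J_{n+4}^{(3)}$ and $J_{n+6}^{(3)}$. Incidentally, your expansion $J_{n+6}^{(3)}=9J_{n+2}^{(3)}+9J_{n+1}^{(3)}+10J_{n}^{(3)}$ is the correct one (and is what actually yields the stated coefficients); the paper's printed version swaps the $9$ and $10$ on $J_{n+1}^{(3)}$ and $J_{n}^{(3)}$, a harmless typo.
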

\begin{proof}
(\ref{eq:1}): By the using the equations (\ref{f1}) and (\ref{s1}), we have
\begin{align*}
&2JQ_{\alpha,\beta,n}^{(3)}+JQ_{\alpha,\beta,n+1}^{(3)}+JQ_{\alpha,\beta,n+2}^{(3)}\\
&=\left(2J_{n}^{(3)}+\sum_{s=1}^{3}2J_{n+s}^{(3)}e_{s}\right)+\left(J_{n+1}^{(3)}+\sum_{s=1}^{3}J_{n+s+1}^{(3)}e_{s}\right)+ \left(J_{n+2}^{(3)}+\sum_{s=1}^{3}J_{n+s+2}^{(3)}e_{s}\right)\\
&=\left(2J_{n}^{(3)}+J_{n+1}^{(3)}+J_{n+2}^{(3)}\right)+\sum_{s=1}^{3}\left(2J_{n+s}^{(3)}+J_{n+s+1}^{(3)}+J_{n+s+2}^{(3)}\right)e_{s}\\
&=J_{n+3}^{(3)}+\sum_{s=1}^{3}J_{n+s+3}^{(3)}e_{s}=JQ_{\alpha,\beta,n+3}^{(3)}.
\end{align*}
Using the identity of third-order Jacobsthal numbers $$J_{n+3}^{(3)}=J_{n+2}^{(3)}+J_{n+1}^{(3)}+2J_{n}^{(3)},\ (n\geq 0)$$ in (\ref{e2}), the last equation becomes $2JQ_{\alpha,\beta,n}^{(3)}+JQ_{\alpha,\beta,n+1}^{(3)}+JQ_{\alpha,\beta,n+2}^{(3)}=JQ_{\alpha,\beta,n+3}^{(3)}$.

(\ref{eq:2}): From equations (\ref{f1}) and (\ref{u}), we conclude that
\begin{align*}
JQ_{\alpha,\beta,n}^{(3)}&-e_{1}JQ_{\alpha,\beta,n+1}^{(3)}-e_{2}JQ_{\alpha,\beta,n+2}^{(3)}-e_{3}JQ_{\alpha,\beta,n+3}^{(3)}\\
&=\left(J_{n}^{(3)}+\sum_{s=1}^{3}J_{n+s}^{(3)}e_{s}\right)-e_{1}\left(J_{n+1}^{(3)}+\sum_{s=1}^{3}J_{n+s+1}^{(3)}e_{s}\right)\\
&\ \ -e_{2}\left(J_{n+2}^{(3)}+\sum_{s=1}^{3}J_{n+s+2}^{(3)}e_{s}\right)-e_{3}\left(J_{n+3}^{(3)}+\sum_{s=1}^{3}J_{n+s+3}^{(3)}e_{s}\right)\\
&=J_{n}^{(3)}+\alpha J_{n+2}^{(3)}+\beta J_{n+4}^{(3)}+\alpha\beta J_{n+6}^{(3)}.
\end{align*}
Substituting the identities of third-order Jacobsthal numbers $J_{n+4}^{(3)}=2J_{n+2}^{(3)}+3J_{n+1}^{(3)}+2J_{n}^{(3)}$ and $J_{n+6}^{(3)}=9J_{n+2}^{(3)}+10J_{n+1}^{(3)}+9J_{n}^{(3)}$ which are well-known using relation (\ref{s1}) into the last equation and after simplifying we can assert that
\begin{align*}
JQ_{\alpha,\beta,n}^{(3)}&-e_{1}JQ_{\alpha,\beta,n+1}^{(3)}-e_{2}JQ_{\alpha,\beta,n+2}^{(3)}-e_{3}JQ_{\alpha,\beta,n+3}^{(3)}\\
&=\left\lbrace
\begin{array}{c}
(1+2\beta +10\alpha\beta)J_{n}^{(3)}+(3\beta+9\alpha\beta)J_{n+1}^{(3)}\\
+(\alpha+2\beta+9\alpha\beta)J_{n+2}^{(3)}
\end{array}
\right\rbrace.
\end{align*}
Special Cases:
\begin{itemize}[noitemsep]
\item For $\alpha=\beta=1$, the equation (\ref{eq:2}) is equivalent to
$$\left\lbrace
\begin{array}{c}
JQ_{1,1,n}^{(3)}-e_{1}JQ_{1,1,n+1}^{(3)}\\
-e_{2}JQ_{1,1,n+2}^{(3)}-e_{3}JQ_{1,1,n+3}^{(3)}\end{array}
\right\rbrace=37J_{n}^{(3)}+12j_{n}^{(3)},$$
wich was given by Cerda-Morales in \cite{Cer}.
\item For the case $\alpha=1$ and $\beta=-1$, the equation (\ref{eq:2}) becomes
$$\left\lbrace
\begin{array}{c}
JQ_{1,-1,n}^{(3)}-e_{1}JQ_{1,-1,n+1}^{(3)}\\
-e_{2}JQ_{1,-1,n+2}^{(3)}-e_{3}JQ_{1,-1,n+3}^{(3)}\end{array}
\right\rbrace=-\left(J_{n}^{(3)}+17J_{n+1}^{(3)}+5j_{n+1}^{(3)}\right).$$
\item Let $\beta=0$. For $\alpha=1$, $\alpha=-1$ and $\alpha=0$, there are following relations
$$JQ_{1,0,n}^{(3)}-e_{1}JQ_{1,0,n+1}^{(3)}-e_{2}JQ_{1,0,n+2}^{(3)}-e_{3}JQ_{1,0,n+3}^{(3)}=J_{n}^{(3)}+J_{n+2}^{(3)},$$
$$JQ_{-1,0,n}^{(3)}-e_{1}JQ_{-1,0,n+1}^{(3)}-e_{2}JQ_{-1,0,n+2}^{(3)}-e_{3}JQ_{-1,0,n+3}^{(3)}=J_{n}^{(3)}-J_{n+2}^{(3)}$$ and
$$JQ_{0,0,n}^{(3)}-e_{1}JQ_{0,0,n+1}^{(3)}-e_{2}JQ_{0,0,n+2}^{(3)}-e_{3}JQ_{0,0,n+3}^{(3)}=J_{n}^{(3)},$$
respectively.
\end{itemize}
\end{proof}

\begin{theorem}\label{th:2}
Let $J_{n}^{(3)}$ and $j_{n}^{(3)}$ be the third-order Jacobsthal and third-order Jacobsthal-Lucas numbers, $JQ_{\alpha,\beta,n}^{(3)}$ and $jQ_{\alpha,\beta,n}^{(3)}$ be the third-order Jacobsthal and third-order Jacobsthal-Lucas generalized quaternions, respectively. In this case, the following equations can be given
\begin{equation}\label{eq:3}
\begin{aligned}
&\left(JQ_{\alpha,\beta,n}^{(3)}\right)^{2}+\left(JQ_{\alpha,\beta,n+1}^{(3)}\right)^{2}+\left(JQ_{\alpha,\beta,n+2}^{(3)}\right)^{2}\\
&=\frac{1}{7}\left\lbrace
\begin{array}{c}
14\left(J_{n}^{(3)}\cdot JQ_{\alpha,\beta,n}^{(3)}+J_{n+1}^{(3)}\cdot JQ_{\alpha,\beta,n+1}^{(3)}+J_{n+2}^{(3)}\cdot JQ_{\alpha,\beta,n+2}^{(3)}\right)\\
-3\cdot 2^{2(n+1)}(1+4\alpha+16\beta +64\alpha\beta)\\
-\cdot 2^{n+2}\left(U_{n}^{(3)}+2\alpha U_{n+1}^{(3)}+4\beta U_{n+2}^{(3)}+8\alpha\beta U_{n+3}^{(3)}\right)\\
-2(1+\alpha+\beta +\alpha\beta)
\end{array}
\right\rbrace.
\end{aligned}
\end{equation}
\begin{equation}\label{eq:4}
\begin{aligned}
\left(jQ_{\alpha,\beta,n}^{(3)}\right)^{2}&-9\left(JQ_{\alpha,\beta,n}^{(3)}\right)^{2}\\
&=\left\lbrace
\begin{array}{c}
2j_{n}^{(3)}\cdot jQ_{\alpha,\beta,n}^{(3)}-18J_{n}^{(3)}\cdot JQ_{\alpha,\beta,n}^{(3)}\\
-2^{n+2}\left(j_{n-3}^{(3)}+2\alpha j_{n-2}^{(3)} +4\beta j_{n-1}^{(3)} +8\alpha\beta j_{n}^{(3)}\right)
\end{array}
\right\rbrace,
\end{aligned}
\end{equation}
where $\alpha$ and $\beta$ are real numbers, and $U_{n}^{(3)}=j_{n-1}^{(3)}-J_{n+1}^{(3)}$.
\end{theorem}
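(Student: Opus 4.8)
The plan is to derive both identities from a single elementary fact about the algebra $\Bbb{H}_{(\alpha,\beta)}$: by the multiplication rules (\ref{u}) one has $e_{s}e_{t}=-e_{t}e_{s}$ whenever $s\neq t$, so when a generalized quaternion $q=q_{r}+q_{i}e_{1}+q_{j}e_{2}+q_{k}e_{3}$ is squared all mixed products of distinct imaginary units cancel in pairs, leaving
\[
q^{2}=\left(q_{r}^{2}-\alpha q_{i}^{2}-\beta q_{j}^{2}-\alpha\beta q_{k}^{2}\right)+2q_{r}(q-q_{r})=2q_{r}\,q-\overline{q}\,q ,
\]
where $\overline{q}\,q=q_{r}^{2}+\alpha q_{i}^{2}+\beta q_{j}^{2}+\alpha\beta q_{k}^{2}$. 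Applying this with $q=JQ_{\alpha,\beta,m}^{(3)}$ and with $q=jQ_{\alpha,\beta,m}^{(3)}$ gives
\[
\left(JQ_{\alpha,\beta,m}^{(3)}\right)^{2}=2J_{m}^{(3)}\cdot JQ_{\alpha,\beta,m}^{(3)}-\Big(\big(J_{m}^{(3)}\big)^{2}+\alpha\big(J_{m+1}^{(3)}\big)^{2}+\beta\big(J_{m+2}^{(3)}\big)^{2}+\alpha\beta\big(J_{m+3}^{(3)}\big)^{2}\Big)
\]
together with the identical formula in which every $J$ is replaced by $j$. This is the only place where the quaternionic structure enters; the rest is arithmetic with the scalar sequences.

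For (\ref{eq:4}) I would subtract $9$ times the $J$-formula from the $j$-formula and group the remainder by the index $m\in\{n,n+1,n+2,n+3\}$, so that what is left beside the scalar terms is a $(1,\alpha,\beta,\alpha\beta)$-weighted combination of the four blocks $\big(j_{m}^{(3)}\big)^{2}-9\big(J_{m}^{(3)}\big)^{2}$. Each such block equals $2^{m+2}j_{m-3}^{(3)}$ by the known identity (\ref{e12}); factoring out $2^{n+2}$ (using $2^{(n+1)+2}=2\cdot 2^{n+2}$, $2^{(n+2)+2}=4\cdot 2^{n+2}$, $2^{(n+3)+2}=8\cdot 2^{n+2}$) turns the weighted combination into $2^{n+2}\big(j_{n-3}^{(3)}+2\alpha j_{n-2}^{(3)}+4\beta j_{n-1}^{(3)}+8\alpha\beta j_{n}^{(3)}\big)$, and collecting everything yields (\ref{eq:4}) verbatim.

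For (\ref{eq:3}) I would sum the $J$-formula over $m=n,n+1,n+2$. Multiplying through by $7$, the scalar parts immediately produce $14\big(J_{n}^{(3)}\cdot JQ_{\alpha,\beta,n}^{(3)}+J_{n+1}^{(3)}\cdot JQ_{\alpha,\beta,n+1}^{(3)}+J_{n+2}^{(3)}\cdot JQ_{\alpha,\beta,n+2}^{(3)}\big)$, while the norm parts reorganize into $S_{n}+\alpha S_{n+1}+\beta S_{n+2}+\alpha\beta S_{n+3}$ with $S_{m}:=\big(J_{m}^{(3)}\big)^{2}+\big(J_{m+1}^{(3)}\big)^{2}+\big(J_{m+2}^{(3)}\big)^{2}$. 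To evaluate $S_{m}$ I would substitute the Binet-type formula $J_{m}^{(3)}=\frac{1}{7}\big(2^{m+1}-V_{m}^{(3)}\big)$ from (\ref{h2}), expand the square, and use two facts about the $3$-periodic sequence $V_{m}^{(3)}$ of (\ref{h1}): the sum of three consecutive squares is $\big(V_{m}^{(3)}\big)^{2}+\big(V_{m+1}^{(3)}\big)^{2}+\big(V_{m+2}^{(3)}\big)^{2}=4+9+1=14$, and, combining the recurrence $V_{m+2}^{(3)}=-V_{m+1}^{(3)}-V_{m}^{(3)}$ with (\ref{h2}), $\frac{1}{7}\big(V_{m}^{(3)}+2V_{m+1}^{(3)}+4V_{m+2}^{(3)}\big)=j_{m-1}^{(3)}-J_{m+1}^{(3)}=U_{m}^{(3)}$. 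This gives $S_{m}=\frac{1}{7}\big(3\cdot 2^{2(m+1)}-2^{m+2}U_{m}^{(3)}+2\big)$, and forming the $(1,\alpha,\beta,\alpha\beta)$-weighted sum $S_{n}+\alpha S_{n+1}+\beta S_{n+2}+\alpha\beta S_{n+3}$ (again using $2^{2(n+t+1)}=4^{t}\,2^{2(n+1)}$ and $2^{(n+t)+2}=2^{t}\,2^{n+2}$) produces precisely the three blocks $2^{2(n+1)}(1+4\alpha+16\beta+64\alpha\beta)$, $2^{n+2}\big(U_{n}^{(3)}+2\alpha U_{n+1}^{(3)}+4\beta U_{n+2}^{(3)}+8\alpha\beta U_{n+3}^{(3)}\big)$ and $1+\alpha+\beta+\alpha\beta$ displayed in (\ref{eq:3}).

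No step is conceptually difficult; the hard part is purely organizational, and it lies in (\ref{eq:3}): one must distribute correctly the twelve squared terms coming from the three quaternion squares among $S_{n},S_{n+1},S_{n+2},S_{n+3}$, prove the bridge identity $\frac{1}{7}\big(V_{m}^{(3)}+2V_{m+1}^{(3)}+4V_{m+2}^{(3)}\big)=U_{m}^{(3)}$ so that the $2^{n+2}$-term is expressed through $U^{(3)}$ rather than $V^{(3)}$, and keep the powers of $2$ and $4$—and their signs—under control while spreading the weights $\alpha,\beta,\alpha\beta$ across the four copies of $S$.
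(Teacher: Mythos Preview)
Your proposal is correct and follows essentially the same route as the paper's own proof: both start from the identity $q^{2}=2q_{r}\,q-(q_{r}^{2}+\alpha q_{i}^{2}+\beta q_{j}^{2}+\alpha\beta q_{k}^{2})$ for a generalized quaternion, sum (for (\ref{eq:3})) or take the difference (for (\ref{eq:4})), and then invoke the scalar identities $(J_{m}^{(3)})^{2}+(J_{m+1}^{(3)})^{2}+(J_{m+2}^{(3)})^{2}=\tfrac{1}{7}\big(3\cdot 2^{2(m+1)}-2^{m+2}U_{m}^{(3)}+2\big)$ and $(j_{m}^{(3)})^{2}-9(J_{m}^{(3)})^{2}=2^{m+2}j_{m-3}^{(3)}$. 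The only difference is that you actually derive the first of these via the Binet formula (\ref{h2}) and the bridge identity $V_{m}^{(3)}+2V_{m+1}^{(3)}+4V_{m+2}^{(3)}=7U_{m}^{(3)}$, whereas the paper simply quotes it.
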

\begin{proof}
(\ref{eq:3}): From equation (\ref{f1}), we get
\begin{align*}
\left(JQ_{\alpha,\beta,n}^{(3)}\right)^{2}&=\left(J_{n}^{(3)}+\sum_{s=1}^{3}J_{n+s}^{(3)}e_{s}\right)\cdot \left(J_{n}^{(3)}+\sum_{s=1}^{3}J_{n+s}^{(3)}e_{s}\right)\\
&=-\left(\left(J_{n}^{(3)}\right)^{2}+\alpha \left(J_{n+1}^{(3)}\right)^{2}+\beta \left(J_{n+2}^{(3)}\right)^{2}+\alpha \beta \left(J_{n+3}^{(3)}\right)^{2}\right)\\
&\ \ + 2J_{n}^{(3)}\cdot JQ_{\alpha,\beta,n}^{(3)}.
\end{align*}
Combining the equation (\ref{s1}) with the last equation gives
\begin{align*}
&\left(JQ_{\alpha,\beta,n}^{(3)}\right)^{2}+\left(JQ_{\alpha,\beta,n+1}^{(3)}\right)^{2}+\left(JQ_{\alpha,\beta,n+2}^{(3)}\right)^{2}\\
&=-\left(\left(J_{n}^{(3)}\right)^{2}+ \left(J_{n+1}^{(3)}\right)^{2}+\left(J_{n+2}^{(3)}\right)^{2}\right)\\
&\ \ - \alpha \left(\left(J_{n+1}^{(3)}\right)^{2}+ \left(J_{n+2}^{(3)}\right)^{2}+\left(J_{n+3}^{(3)}\right)^{2}\right)\\
&\ \ - \beta \left(\left(J_{n+2}^{(3)}\right)^{2}+ \left(J_{n+3}^{(3)}\right)^{2}+\left(J_{n+4}^{(3)}\right)^{2}\right)\\
&\ \ - \alpha \beta \left(\left(J_{n+3}^{(3)}\right)^{2}+ \left(J_{n+4}^{(3)}\right)^{2}+\left(J_{n+5}^{(3)}\right)^{2}\right)\\
&\ \ + 2\left(J_{n}^{(3)}\cdot JQ_{\alpha,\beta,n}^{(3)}+J_{n+1}^{(3)}\cdot JQ_{\alpha,\beta,n+1}^{(3)}+J_{n+2}^{(3)}\cdot JQ_{\alpha,\beta,n+2}^{(3)}\right).
\end{align*}
Thus, using the identity of the third-order Jacobsthal numbers $\left(J_{n}^{(3)}\right)^{2}+ \left(J_{n+1}^{(3)}\right)^{2}+\left(J_{n+2}^{(3)}\right)^{2}=\frac{1}{7}\left(3\cdot 2^{2(n+1)}-2^{n+2}U_{n}^{(3)}+2\right)$ by (\ref{b1}) into the last equation and after simplifying we obtain that
\begin{align*}
&\left(JQ_{\alpha,\beta,n}^{(3)}\right)^{2}+\left(JQ_{\alpha,\beta,n+1}^{(3)}\right)^{2}+\left(JQ_{\alpha,\beta,n+2}^{(3)}\right)^{2}\\
&=\left\lbrace
\begin{array}{c}
2\left(J_{n}^{(3)}\cdot JQ_{\alpha,\beta,n}^{(3)}+J_{n+1}^{(3)}\cdot JQ_{\alpha,\beta,n+1}^{(3)}+J_{n+2}^{(3)}\cdot JQ_{\alpha,\beta,n+2}^{(3)}\right)\\
-\frac{3}{7}\cdot 2^{2(n+1)}(1+4\alpha+16\beta +64\alpha\beta)\\
-\frac{1}{7}\cdot 2^{n+2}\left(U_{n}^{(3)}+2\alpha U_{n+1}^{(3)}+4\beta U_{n+2}^{(3)}+8\alpha\beta U_{n+3}^{(3)}\right)\\
-\frac{2}{7}(1+\alpha+\beta +\alpha\beta)
\end{array}
\right\rbrace,
\end{align*}
where $U_{n}^{(3)}=j_{n-1}^{(3)}-J_{n+1}^{(3)}$.

(\ref{eq:4}): In the same manner, from the equations (\ref{f1}), (\ref{f2}) and (\ref{s1}) we can see that
\begin{align*}
&\left(jQ_{\alpha,\beta,n}^{(3)}\right)^{2}-9\left(JQ_{\alpha,\beta,n}^{(3)}\right)^{2}\\
&=2j_{n}^{(3)}\cdot jQ_{\alpha,\beta,n}^{(3)}-18J_{n}^{(3)}\cdot JQ_{\alpha,\beta,n}^{(3)}\\
&\ \ -\left(\left(j_{n}^{(3)}\right)^{2}-9\left(J_{n}^{(3)}\right)^{2}\right)-\alpha \left(\left(j_{n+1}^{(3)}\right)^{2}-9\left(J_{n+1}^{(3)}\right)^{2}\right)\\
&\ \ -\beta \left(\left(j_{n+2}^{(3)}\right)^{2}-9\left(J_{n+2}^{(3)}\right)^{2}\right)-\alpha \beta \left(\left(j_{n+3}^{(3)}\right)^{2}-9\left(J_{n+3}^{(3)}\right)^{2}\right).
\end{align*}
Putting the identity $\left( j_{n}^{(3)}\right) ^{2}-9\left( J_{n}^{(3)}\right)^{2}=2^{n+2}j_{n-3}^{(3)}$ of the third-order Jacobsthal numbers (see \cite{Cook-Bac}) into the last equations and after an easy computation we obtain
\begin{align*}
\left(jQ_{\alpha,\beta,n}^{(3)}\right)^{2}-9\left(JQ_{\alpha,\beta,n}^{(3)}\right)^{2}&=2j_{n}^{(3)}\cdot jQ_{\alpha,\beta,n}^{(3)}-18J_{n}^{(3)}\cdot JQ_{\alpha,\beta,n}^{(3)}\\
&\ \ -2^{n+2}\left(j_{n-3}^{(3)}+2\alpha j_{n-2}^{(3)} +4\beta j_{n-1}^{(3)} +8\alpha\beta j_{n}^{(3)}\right).
\end{align*}

Special cases:
\begin{itemize}[noitemsep]
\item For $\alpha=\beta=1$, we have the following relations for the third-order Jacobsthal quaternions which were given by Cerda-Morales in \cite{Cer}.
$$\left(jQ_{1,1,n}^{(3)}\right)^{2}-9\left(JQ_{1,1,n}^{(3)}\right)^{2}=2\left\lbrace
\begin{array}{c}
j_{n}^{(3)}\cdot jQ_{1,1,n}^{(3)}-9J_{n}^{(3)}\cdot JQ_{1,1,n}^{(3)}\\
-2^{n}\left(17j_{n}^{(3)}+7j_{n-1}^{(3)} +3 j_{n-2}^{(3)}\right)
\end{array}
\right\rbrace.$$
\item Let $\alpha=1$ and $\beta=-1$. In this case, we have the following relations for the split third-order Jacobsthal quaternions: 
$$\left(jQ_{1,-1,n}^{(3)}\right)^{2}-9\left(JQ_{1,-1,n}^{(3)}\right)^{2}=2\left\lbrace
\begin{array}{c}
j_{n}^{(3)}\cdot jQ_{1,-1,n}^{(3)}-9J_{n}^{(3)}\cdot JQ_{1,-1,n}^{(3)}\\
+3\cdot 2^{n}\left(5j_{n}^{(3)}+3j_{n-1}^{(3)}-j_{n-2}^{(3)} \right)
\end{array}
\right\rbrace.$$
\item Let $\beta=0$. For $\alpha=1$, $\alpha=-1$ and $\alpha=0$, the equation (\ref{eq:4}) becomes
$$\left(jQ_{1,0,n}^{(3)}\right)^{2}-9\left(JQ_{1,0,n}^{(3)}\right)^{2}=2\left\lbrace
\begin{array}{c}
j_{n}^{(3)}\cdot jQ_{1,0,n}^{(3)}-9J_{n}^{(3)}\cdot JQ_{1,0,n}^{(3)}\\
-2^{n+1}\left(j_{n}^{(3)}-j_{n-1}^{(3)}\right)
\end{array}
\right\rbrace,$$
$$\left(jQ_{-1,0,n}^{(3)}\right)^{2}-9\left(JQ_{-1,0,n}^{(3)}\right)^{2}=2\left\lbrace
\begin{array}{c}
j_{n}^{(3)}\cdot jQ_{-1,0,n}^{(3)}-9J_{n}^{(3)}\cdot JQ_{-1,0,n}^{(3)}\\
-2^{n+1}\left(j_{n-3}^{(3)}-2j_{n-2}^{(3)}\right)
\end{array}
\right\rbrace$$ and
$$\left(jQ_{0,0,n}^{(3)}\right)^{2}-9\left(JQ_{0,0,n}^{(3)}\right)^{2}=2\left\lbrace
\begin{array}{c}
j_{n}^{(3)}\cdot jQ_{0,0,n}^{(3)}-9J_{n}^{(3)}\cdot JQ_{0,0,n}^{(3)}-2^{n+1}j_{n-3}^{(3)}
\end{array}
\right\rbrace,$$
respectively.
\end{itemize}
\end{proof}

In Theorem \ref{th:3}, the first identity of norm for $\alpha=\beta=1$ is analogous to the ordinary third-order Jacobsthal quaternions 
\begin{equation}\label{t1}
\cdot Nr(JQ_{n}^{(3)})=\frac{1}{49}\left\{ 
\begin{array}{ccc}
340\cdot 2^{2n}-64\cdot 2^{n}+18 & \textrm{if} & \mymod{n}{0}{3} \\ 
340\cdot 2^{2n}+68\cdot 2^{n}+23& \textrm{if} & \mymod{n}{1}{3} \\ 
340\cdot 2^{2n}-4\cdot 2^{n}+15& \textrm{if} & \mymod{n}{2}{3}
\end{array}%
\right. ,
\end{equation}
(for more details, see \cite{Cer}).

\begin{theorem}\label{th:3}
Let $J_{n}^{(3)}$ be the third-order Jacobsthal number, $JQ_{\alpha,\beta,n}^{(3)}$ be the third-order Jacobsthal generalized quaternion and $\overline{JQ_{\alpha,\beta,n}^{(3)}}$ be the conjugate of $JQ_{\alpha,\beta,n}^{(3)}$. Then, the following equation hold
\begin{equation}\label{eq:5}
Nr(JQ_{\alpha,\beta,n}^{(3)})=\frac{1}{49}\left\lbrace
\begin{array}{c}
2^{2(n+1)}(1+4\alpha+16\beta+64\alpha\beta)\\
-2^{n+2}\left((1+8\alpha\beta-4\beta)V_{n}^{(3)}+(2\alpha-4\beta) V_{n+1}^{(3)}\right)\\
+(1+\alpha\beta)\left(V_{n}^{(3)}\right)^{2}+\alpha \left(V_{n+1}^{(3)}\right)^{2}+\beta \left(V_{n+2}^{(3)}\right)^{2}
\end{array}
\right\rbrace,
\end{equation}
and $V_{n}^{(3)}$ as in Eq. (\ref{h1}).
\end{theorem}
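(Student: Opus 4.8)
The plan is to reduce the norm to a quadratic form in the third-order Jacobsthal numbers and then pass to the representation $J_{m}^{(3)}=\frac{1}{7}\big(2^{m+1}-V_{m}^{(3)}\big)$ from (\ref{h2}), using only the recurrence and the resulting $3$-periodicity of $V_{m}^{(3)}$ from (\ref{h1}).

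First I would compute $\overline{JQ_{\alpha,\beta,n}^{(3)}}\cdot JQ_{\alpha,\beta,n}^{(3)}$ directly from the multiplication rules (\ref{u}) and the definition (\ref{s2}) of the conjugate. For a generic generalized quaternion $q=q_{r}+q_{i}e_{1}+q_{j}e_{2}+q_{k}e_{3}$ one has $\bar q\,q=q_{r}^{2}+\alpha q_{i}^{2}+\beta q_{j}^{2}+\alpha\beta q_{k}^{2}$, because the mixed products cancel in pairs ($e_{1}e_{2}+e_{2}e_{1}=0$, and similarly for the other two pairs) while $e_{s}^{2}\in\{-\alpha,-\beta,-\alpha\beta\}$. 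In particular $\overline{JQ_{\alpha,\beta,n}^{(3)}}\cdot JQ_{\alpha,\beta,n}^{(3)}$ is a real scalar, and taking $(q_{r},q_{i},q_{j},q_{k})=\big(J_{n}^{(3)},J_{n+1}^{(3)},J_{n+2}^{(3)},J_{n+3}^{(3)}\big)$ gives
\[
Nr(JQ_{\alpha,\beta,n}^{(3)})=\big(J_{n}^{(3)}\big)^{2}+\alpha\big(J_{n+1}^{(3)}\big)^{2}+\beta\big(J_{n+2}^{(3)}\big)^{2}+\alpha\beta\big(J_{n+3}^{(3)}\big)^{2}.
\]

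Next I would substitute $J_{n+s}^{(3)}=\frac{1}{7}\big(2^{n+s+1}-V_{n+s}^{(3)}\big)$ for $s=0,1,2,3$, expand each square, and sort by powers of $2$. The pure powers of $4$ contribute $\frac{1}{49}2^{2(n+1)}(1+4\alpha+16\beta+64\alpha\beta)$; the cross terms contribute $-\frac{1}{49}2^{n+2}\big(V_{n}^{(3)}+2\alpha V_{n+1}^{(3)}+4\beta V_{n+2}^{(3)}+8\alpha\beta V_{n+3}^{(3)}\big)$; and the squared $V$-terms contribute $\frac{1}{49}\big((V_{n}^{(3)})^{2}+\alpha(V_{n+1}^{(3)})^{2}+\beta(V_{n+2}^{(3)})^{2}+\alpha\beta(V_{n+3}^{(3)})^{2}\big)$.

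The final step is the only one requiring a small observation: from $V_{m+2}^{(3)}=-V_{m+1}^{(3)}-V_{m}^{(3)}$ one gets $V_{m+3}^{(3)}=-V_{m+2}^{(3)}-V_{m+1}^{(3)}=V_{m}^{(3)}$, so $V^{(3)}$ is $3$-periodic (also evident from (\ref{h1})). Using $V_{n+3}^{(3)}=V_{n}^{(3)}$ collapses the squared-$V$ block to $(1+\alpha\beta)(V_{n}^{(3)})^{2}+\alpha(V_{n+1}^{(3)})^{2}+\beta(V_{n+2}^{(3)})^{2}$, and using in addition $V_{n+2}^{(3)}=-V_{n+1}^{(3)}-V_{n}^{(3)}$ rewrites $V_{n}^{(3)}+2\alpha V_{n+1}^{(3)}+4\beta V_{n+2}^{(3)}+8\alpha\beta V_{n+3}^{(3)}$ as $(1+8\alpha\beta-4\beta)V_{n}^{(3)}+(2\alpha-4\beta)V_{n+1}^{(3)}$. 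Collecting the three blocks over the common denominator $49$ yields exactly (\ref{eq:5}). There is no genuine obstacle; the computation is purely mechanical once $V_{n+3}^{(3)}=V_{n}^{(3)}$ is recorded. If desired, one may finish by substituting the residue values $V_{n}^{(3)}\in\{2,-3,1\}$ from (\ref{h1}) to obtain a case-by-case expression modulo $3$, and verify that $\alpha=\beta=1$ recovers (\ref{t1}).
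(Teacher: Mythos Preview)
Your proof is correct and follows essentially the same route as the paper's: compute the norm as $(J_{n}^{(3)})^{2}+\alpha(J_{n+1}^{(3)})^{2}+\beta(J_{n+2}^{(3)})^{2}+\alpha\beta(J_{n+3}^{(3)})^{2}$, substitute the Binet-type formula (\ref{h2}), and simplify using $V_{n+3}^{(3)}=V_{n}^{(3)}$ together with $V_{n}^{(3)}+V_{n+1}^{(3)}+V_{n+2}^{(3)}=0$. The only cosmetic difference is that the paper writes the product in the order $JQ_{\alpha,\beta,n}^{(3)}\cdot\overline{JQ_{\alpha,\beta,n}^{(3)}}$ rather than $\overline{JQ_{\alpha,\beta,n}^{(3)}}\cdot JQ_{\alpha,\beta,n}^{(3)}$, which makes no difference since the result is scalar.
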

\begin{proof}
By multiplication of two third-order Jacobsthal generalized quaternions, and by using the identity of the third-order Jacobsthal numbers $J_{n+3}^{(3)}=J_{n+2}^{(3)}+J_{n+1}^{(3)}+2J_{n}^{(3)}$ it may be concluded that
\begin{align*}
Nr(JQ_{\alpha,\beta,n}^{(3)})&=\left(J_{n}^{(3)}+\sum_{s=1}^{3}J_{n+s}^{(3)}e_{s}\right)\cdot \left(J_{n}^{(3)}-\sum_{s=1}^{3}J_{n+s}^{(3)}e_{s}\right)\\
&=\left(J_{n}^{(3)}\right)^{2}+\alpha \left(J_{n+1}^{(3)}\right)^{2}+\beta \left(J_{n+2}^{(3)}\right)^{2}+\alpha \beta \left(J_{n+3}^{(3)}\right)^{2}.
\end{align*}
Finally, from the Binet formula (\ref{h2}) of $J_{n}^{(3)}$ it is obvious that
$$\left(J_{n}^{(3)}\right)^{2}=\frac{1}{49}\left(2^{n+1}-V_{n}^{(3)}\right)^{2}=\frac{1}{49}\left(2^{2(n+1)}-2^{n+2}V_{n}^{(3)}+\left(V_{n}^{(3)}\right)^{2}\right).$$
Then, we have
\begin{align*}
Nr(JQ_{\alpha,\beta,n}^{(3)})&=\frac{1}{49}\left\lbrace
\begin{array}{c}
2^{2(n+1)}(1+4\alpha+16\beta+64\alpha\beta)\\
-2^{n+2}\left(V_{n}^{(3)}+2\alpha V_{n+1}^{(3)}+4\beta V_{n+2}^{(3)}+8\alpha\beta V_{n+3}^{(3)}\right)\\
+\left(V_{n}^{(3)}\right)^{2}+\alpha \left(V_{n+1}^{(3)}\right)^{2}+\beta \left(V_{n+2}^{(3)}\right)^{2}+\alpha\beta \left(V_{n+3}^{(3)}\right)^{2}
\end{array}
\right\rbrace\\
&=\frac{1}{49}\left\lbrace
\begin{array}{c}
2^{2(n+1)}(1+4\alpha+16\beta+64\alpha\beta)\\
-2^{n+2}\left((1+8\alpha\beta-4\beta)V_{n}^{(3)}+(2\alpha-4\beta) V_{n+1}^{(3)}\right)\\
+(1+\alpha\beta)\left(V_{n}^{(3)}\right)^{2}+\alpha \left(V_{n+1}^{(3)}\right)^{2}+\beta \left(V_{n+2}^{(3)}\right)^{2}
\end{array}
\right\rbrace,
\end{align*}
using the relations $V_{n}^{(3)}+V_{n+1}^{(3)}+V_{n+2}^{(3)}=0$ and $V_{n}^{(3)}=V_{n+3}^{(3)}$ for $n\geq 0$.

Special Cases:\\
By scrutinizing $\alpha$ and $\beta$, the equation (\ref{eq:5}) becomes as follow:
\begin{itemize}[noitemsep]
\item $$Nr(JQ_{1,1,n}^{(3)})=\frac{1}{49}\left\lbrace
\begin{array}{c}
85\cdot 2^{2(n+1)}-2^{n+2}\left(5V_{n}^{(3)}-2V_{n+1}^{(3)}\right)+\left(V_{n}^{(3)}\right)^{2}+14
\end{array}
\right\rbrace,$$
\item $$Nr(JQ_{1,-1,n}^{(3)})=\frac{1}{49}\left\lbrace
\begin{array}{c}
-75\cdot 2^{2(n+1)}-3\cdot 2^{n+2}\left(2V_{n+1}^{(3)}-V_{n}^{(3)}\right)\\
+\left(V_{n+1}^{(3)}\right)^{2}-\left(V_{n+2}^{(3)}\right)^{2}
\end{array}
\right\rbrace,$$
\item $$Nr(JQ_{1,0,n}^{(3)})=\frac{1}{49}\left\lbrace
\begin{array}{c}
5\cdot 2^{2(n+1)}-2^{n+2}\left(V_{n}^{(3)}+2V_{n+1}^{(3)}\right)\\
+\left(V_{n}^{(3)}\right)^{2}+\left(V_{n+1}^{(3)}\right)^{2}
\end{array}
\right\rbrace,$$
\item $$Nr(JQ_{-1,0,n}^{(3)})=\frac{1}{49}\left\lbrace
\begin{array}{c}
-3\cdot 2^{2(n+1)}-2^{n+2}\left(V_{n}^{(3)}-2V_{n+1}^{(3)}\right)\\
+\left(V_{n}^{(3)}\right)^{2}-\left(V_{n+1}^{(3)}\right)^{2}
\end{array}
\right\rbrace,$$
\item $$Nr(JQ_{0,0,n}^{(3)})=\frac{1}{49}\left\lbrace
\begin{array}{c}
2^{2(n+1)}-2^{n+2}V_{n}^{(3)}+\left(V_{n}^{(3)}\right)^{2}
\end{array}
\right\rbrace,$$
\end{itemize}
\end{proof}

In the following theorem, the first and second formulas are analogous to the Theorem 3.3 and 3.4 in \cite{Cer}.
\begin{theorem}[Binet's Formulas]\label{th:4}
Let $\widehat{2}=1+2e_{1}+4e_{2}+8e_{3}$, $\widehat{\omega_{1}}=1+\omega_{1}e_{1}+\omega_{1}^{2}e_{2}+e_{3}$ and $\widehat{\omega_{2}}=1+\omega_{2}e_{1}+\omega_{2}^{2}e_{2}+e_{3}$ generalized quaternions. Let $JQ_{\alpha,\beta,n}^{(3)}$ and $jQ_{\alpha,\beta,n}^{(3)}$ be the third-orderJacobsthal and third-order Jacobsthal-Lucas generalized quaternions, respectively. For $n\geq0$, the Binet formulas for these quaternions are given as:
\begin{equation}
JQ_{\alpha,\beta,n}^{(3)}=\frac{1}{7}\left(2^{n+1}\widehat{2}-VQ_{n}^{(3)}\right)
\end{equation}
and
\begin{equation}
jQ_{\alpha,\beta,n}^{(3)}=\frac{1}{7}\left(2^{n+3}\widehat{2}+3VQ_{n}^{(3)}\right),
\end{equation}
respectively. Here, the sequence $VQ_{n}^{(3)}$ is defined by
\begin{equation}\label{q1}
VQ_{n}^{(3)}=\frac{A\omega_{1}^{n}\widehat{\omega_{1}}-B\omega_{2}^{n}\widehat{\omega_{2}}}{\omega_{1}-\omega_{2}}=\left\{ 
\begin{array}{ccc}
2-3e_{1}+e_{2}+2e_{3} & \textrm{if} & \mymod{n}{0}{3} \\ 
-3+e_{1}+2e_{2}-3e_{3} & \textrm{if} & \mymod{n}{1}{3} \\ 
1+2e_{1}-3e_{2}+e_{3}& \textrm{if} & \mymod{n}{2}{3}
\end{array}
\right. ,
\end{equation}
where $A=-3-2\omega_{2}$ and $B=-3-2\omega_{1}$. Furthermore, note that for all $n\geq0$ we have $VQ_{n+2}^{(3)}=-VQ_{n+1}^{(3)}-VQ_{n}^{(3)}$.
\end{theorem}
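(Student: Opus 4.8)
The plan is to prove both Binet formulas at once by expanding the quaternions componentwise over the basis $\{1,e_{1},e_{2},e_{3}\}$ (writing $e_{0}=\mathbf 1$) and substituting the scalar Binet-type identities (\ref{h2}). From (\ref{f1}) one has $JQ_{\alpha,\beta,n}^{(3)}=\sum_{s=0}^{3}J_{n+s}^{(3)}e_{s}$, and inserting $J_{n+s}^{(3)}=\tfrac{1}{7}\bigl(2^{n+s+1}-V_{n+s}^{(3)}\bigr)$ and splitting the sum gives
\[
JQ_{\alpha,\beta,n}^{(3)}=\frac{1}{7}\left(2^{n+1}\sum_{s=0}^{3}2^{s}e_{s}-\sum_{s=0}^{3}V_{n+s}^{(3)}e_{s}\right).
\]
Observe that the multiplication rules (\ref{u}) of $\mathbb{H}_{(\alpha,\beta)}$ are never used here: everything in sight is a real-linear combination of $1,e_{1},e_{2},e_{3}$, so the Binet formulas in fact hold for all $\alpha,\beta$ simultaneously. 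The first inner sum is $\sum_{s=0}^{3}2^{s}e_{s}=1+2e_{1}+4e_{2}+8e_{3}=\widehat{2}$ by definition.

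For the second inner sum I would feed in the representation $V_{m}^{(3)}=\tfrac{A\omega_{1}^{m}-B\omega_{2}^{m}}{\omega_{1}-\omega_{2}}$ from (\ref{h1}), obtaining
\[
\sum_{s=0}^{3}V_{n+s}^{(3)}e_{s}=\frac{1}{\omega_{1}-\omega_{2}}\left(A\omega_{1}^{n}\sum_{s=0}^{3}\omega_{1}^{s}e_{s}-B\omega_{2}^{n}\sum_{s=0}^{3}\omega_{2}^{s}e_{s}\right).
\]
The crucial point is that $\omega_{1},\omega_{2}$ are the primitive cube roots of unity, so $\omega_{i}^{3}=1$ and hence $\sum_{s=0}^{3}\omega_{i}^{s}e_{s}=1+\omega_{i}e_{1}+\omega_{i}^{2}e_{2}+e_{3}=\widehat{\omega_{i}}$; this collapse $\omega_{i}^{3}e_{3}=e_{3}$ is precisely why the coefficient of $e_{3}$ in $\widehat{\omega_{i}}$ is $1$ rather than $\omega_{i}^{3}$. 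Therefore $\sum_{s=0}^{3}V_{n+s}^{(3)}e_{s}=\tfrac{A\omega_{1}^{n}\widehat{\omega_{1}}-B\omega_{2}^{n}\widehat{\omega_{2}}}{\omega_{1}-\omega_{2}}=VQ_{n}^{(3)}$ by the definition in (\ref{q1}), and the first formula follows. The second one is obtained in exactly the same way, substituting $j_{n+s}^{(3)}=\tfrac{1}{7}\bigl(2^{n+s+3}+3V_{n+s}^{(3)}\bigr)$ from (\ref{h2}) into (\ref{f2}) and reusing the two sums just computed.

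It then remains to verify the three auxiliary assertions about $VQ_{n}^{(3)}$. First, the closed form is a genuine element of $\mathbb{H}_{(\alpha,\beta)}$ and not merely of its complexification, because $\omega_{2}=\overline{\omega_{1}}$, $B=\overline{A}$, and $\widehat{\omega_{2}}$ is the entrywise complex conjugate of $\widehat{\omega_{1}}$, so the numerator is a difference of entrywise conjugates while $\omega_{1}-\omega_{2}$ is purely imaginary; the quotient is thus a real quaternion. Second, the explicit $3$-periodic values are read off directly from the identity $VQ_{n}^{(3)}=\sum_{s=0}^{3}V_{n+s}^{(3)}e_{s}$ just established, using the period-$3$ table $V_{m}^{(3)}\in\{2,-3,1\}$ for $m\equiv 0,1,2\pmod 3$ recorded in (\ref{h1}): listing $V_{n}^{(3)},V_{n+1}^{(3)},V_{n+2}^{(3)},V_{n+3}^{(3)}$ in each residue class produces precisely the three quaternions displayed in (\ref{q1}). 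Third, the recurrence $VQ_{n+2}^{(3)}=-VQ_{n+1}^{(3)}-VQ_{n}^{(3)}$ is inherited componentwise from $V_{m+2}^{(3)}=-V_{m+1}^{(3)}-V_{m}^{(3)}$, equivalently from $\omega_{i}^{2}=-\omega_{i}-1$. I do not expect any genuine obstacle; the only steps that deserve care are the cube-root-of-unity collapse $\omega_{i}^{3}e_{3}=e_{3}$ noted above and the bookkeeping of the periodic values across the three residue classes.
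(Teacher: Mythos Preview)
Your proof is correct. The paper in fact states this theorem without proof, merely remarking that the formulas are analogous to Theorems~3.3 and~3.4 in \cite{Cer}; your componentwise substitution of the scalar identities (\ref{h2}) into (\ref{f1}) and (\ref{f2}), followed by the identification of $\sum_{s}2^{s}e_{s}=\widehat{2}$ and $\sum_{s}\omega_{i}^{s}e_{s}=\widehat{\omega_{i}}$ via $\omega_{i}^{3}=1$, is exactly the natural argument that remark points to, and your verification of the explicit $3$-periodic values and the recurrence for $VQ_{n}^{(3)}$ is complete.
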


The following theorem gives d'Ocagne's identities for third-order Jacobsthal generalized quaternion.
\begin{theorem}\label{th:5}
If $JQ_{\alpha,\beta,n}^{(3)}$ be the $n$-th third-order Jacobsthal generalized quaternion. Then, for any integers $n$ and $m$, we have
\begin{equation}\label{p10}
JQ_{\alpha,\beta,m}^{(3)}\cdot JQ_{\alpha,\beta,n+1}^{(3)}-JQ_{\alpha,\beta,m+1}^{(3)}\cdot JQ_{\alpha,\beta,n}^{(3)}=\frac{1}{7}\left\lbrace\begin{array}{ccc}2^{m+1}\widehat{2} UQ_{n+1}^{(3)}-2^{n+1}UQ_{m+1}^{(3)}\widehat{2}\\
-\frac{\sqrt{3}}{3}i\left(\omega_{1}^{m-n}\widehat{\omega_{1}}\widehat{\omega_{2}}-\omega_{2}^{m-n}\widehat{\omega_{2}}\widehat{\omega_{1}}\right)\end{array}
\right\rbrace
\end{equation}
where $\widehat{2}=1+2e_{1}+4e_{2}+8e_{3}$, $\widehat{\omega_{1}}=1+\omega_{1}e_{1}+\omega_{1}^{2}e_{2}+e_{3}$, $\widehat{\omega_{2}}=1+\omega_{2}e_{1}+\omega_{2}^{2}e_{2}+e_{3}$ and $UQ_{n}^{(3)}=jQ_{\alpha,\beta,n-1}^{(3)}-JQ_{\alpha,\beta,n+1}^{(3)}$.
\end{theorem}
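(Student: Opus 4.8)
The plan is to substitute the Binet formulas of Theorem \ref{th:4} into both products, expand, and then subtract, keeping in mind that $\Bbb{H}_{(\alpha,\beta)}$ is noncommutative so the order of the constant quaternions $\widehat{2},\widehat{\omega_{1}},\widehat{\omega_{2}}$ must be preserved throughout. Writing $JQ_{\alpha,\beta,k}^{(3)}=\frac{1}{7}(2^{k+1}\widehat{2}-VQ_{k}^{(3)})$ for each index that occurs, the product $JQ_{\alpha,\beta,m}^{(3)}\cdot JQ_{\alpha,\beta,n+1}^{(3)}$ equals $\frac{1}{49}(2^{m+n+3}\widehat{2}^{\,2}-2^{m+1}\widehat{2}\,VQ_{n+1}^{(3)}-2^{n+2}VQ_{m}^{(3)}\widehat{2}+VQ_{m}^{(3)}VQ_{n+1}^{(3)})$, and $JQ_{\alpha,\beta,m+1}^{(3)}\cdot JQ_{\alpha,\beta,n}^{(3)}$ equals the analogous expression with the index shifts $m\mapsto m+1$ in the first factor and $n+1\mapsto n$ in the second. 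Subtracting, the two $2^{m+n+3}\widehat{2}^{\,2}$ terms cancel identically, and the difference breaks into a ``$\widehat{2}$-part'' and a ``$VQ$-part''.

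For the $\widehat{2}$-part what survives is $2^{m+1}\widehat{2}(2VQ_{n}^{(3)}-VQ_{n+1}^{(3)})-2^{n+1}(2VQ_{m}^{(3)}-VQ_{m+1}^{(3)})\widehat{2}$. The key reduction is the identity $2VQ_{k}^{(3)}-VQ_{k+1}^{(3)}=7\,UQ_{k+1}^{(3)}$: from $jQ_{\alpha,\beta,k}^{(3)}=\frac{1}{7}(2^{k+3}\widehat{2}+3VQ_{k}^{(3)})$ and $JQ_{\alpha,\beta,k+2}^{(3)}=\frac{1}{7}(2^{k+3}\widehat{2}-VQ_{k+2}^{(3)})$ together with $UQ_{k+1}^{(3)}=jQ_{\alpha,\beta,k}^{(3)}-JQ_{\alpha,\beta,k+2}^{(3)}$ one gets $7\,UQ_{k+1}^{(3)}=3VQ_{k}^{(3)}+VQ_{k+2}^{(3)}$, and then the recurrence $VQ_{k+2}^{(3)}=-VQ_{k+1}^{(3)}-VQ_{k}^{(3)}$ from Theorem \ref{th:4} finishes it. Hence the $\widehat{2}$-part collapses to $\frac{1}{7}(2^{m+1}\widehat{2}\,UQ_{n+1}^{(3)}-2^{n+1}UQ_{m+1}^{(3)}\widehat{2})$.

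For the $VQ$-part, I substitute $VQ_{k}^{(3)}=\frac{A\omega_{1}^{k}\widehat{\omega_{1}}-B\omega_{2}^{k}\widehat{\omega_{2}}}{\omega_{1}-\omega_{2}}$ and expand $VQ_{m}^{(3)}VQ_{n+1}^{(3)}-VQ_{m+1}^{(3)}VQ_{n}^{(3)}$. The $A^{2}\widehat{\omega_{1}}^{\,2}$ and $B^{2}\widehat{\omega_{2}}^{\,2}$ terms cancel between the two products, so only the cross terms $-AB\,\widehat{\omega_{1}}\widehat{\omega_{2}}$ and $-AB\,\widehat{\omega_{2}}\widehat{\omega_{1}}$ remain (and these must be kept as distinct, noncommuting quantities). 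Using $\omega_{1}\omega_{2}=1$ one rewrites $\omega_{1}^{m}\omega_{2}^{n+1}-\omega_{1}^{m+1}\omega_{2}^{n}=-\omega_{1}^{m-n}(\omega_{1}-\omega_{2})$ and $\omega_{2}^{m}\omega_{1}^{n+1}-\omega_{2}^{m+1}\omega_{1}^{n}=\omega_{2}^{m-n}(\omega_{1}-\omega_{2})$, which cancels one factor of $\omega_{1}-\omega_{2}$ and reduces the $VQ$-part to $\frac{AB}{\omega_{1}-\omega_{2}}(\omega_{1}^{m-n}\widehat{\omega_{1}}\widehat{\omega_{2}}-\omega_{2}^{m-n}\widehat{\omega_{2}}\widehat{\omega_{1}})$. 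Finally the scalars evaluate as $AB=(-3-2\omega_{2})(-3-2\omega_{1})=9+6(\omega_{1}+\omega_{2})+4\omega_{1}\omega_{2}=7$ (since $\omega_{1}+\omega_{2}=-1$ and $\omega_{1}\omega_{2}=1$) and $\omega_{1}-\omega_{2}=i\sqrt{3}$, so $\frac{AB}{\omega_{1}-\omega_{2}}=\frac{7}{i\sqrt{3}}=-7\cdot\frac{\sqrt{3}}{3}i$. Dividing through by the remaining factor $49$ and assembling the $\widehat{2}$-part and the $VQ$-part gives exactly (\ref{p10}).

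I expect the main obstacle to be purely bookkeeping rather than conceptual: noncommutativity forces one to track whether $\widehat{2}$ sits to the left or the right of $UQ_{n+1}^{(3)}$ and $UQ_{m+1}^{(3)}$, and to verify that the order-sensitive products $\widehat{\omega_{1}}\widehat{\omega_{2}}$ and $\widehat{\omega_{2}}\widehat{\omega_{1}}$ come out attached to $\omega_{1}^{m-n}$ and $\omega_{2}^{m-n}$ in the orientation shown in the statement. Apart from that, the only facts used are the Binet formulas and recurrence of Theorem \ref{th:4}, the relation $2VQ_{k}^{(3)}-VQ_{k+1}^{(3)}=7UQ_{k+1}^{(3)}$ derived above, and the elementary scalar identities $\omega_{1}+\omega_{2}=-1$, $\omega_{1}\omega_{2}=1$, $AB=7$.
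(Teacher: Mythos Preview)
Your proof is correct and follows essentially the same route as the paper's own proof: substitute the Binet formula of Theorem~\ref{th:4}, expand the two products, cancel the $2^{m+n+3}\widehat{2}^{\,2}$ terms, and split what remains into the $\widehat{2}$-part and the $VQ$-part. The paper records exactly the same intermediate expression (your second display) and then jumps directly to the final line; you have simply made explicit the two steps the paper leaves implicit, namely the identity $2VQ_{k}^{(3)}-VQ_{k+1}^{(3)}=7\,UQ_{k+1}^{(3)}$ and the reduction of $VQ_{m}^{(3)}VQ_{n+1}^{(3)}-VQ_{m+1}^{(3)}VQ_{n}^{(3)}$ via $AB=7$ and $\omega_{1}-\omega_{2}=i\sqrt{3}$.
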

\begin{proof}
Using the Binet formula for the third-order Jacobsthal generalized quaternions and $VQ_{n}^{(3)}$ in (\ref{q1}) gives
\begin{equation}\label{r1}
\begin{aligned}
&JQ_{\alpha,\beta,m}^{(3)}\cdot JQ_{\alpha,\beta,n+1}^{(3)}-JQ_{\alpha,\beta,m+1}^{(3)}\cdot JQ_{\alpha,\beta,n}^{(3)}\\
&=\frac{1}{49}\left\lbrace\begin{array}{ccc}\left(2^{m+1}\widehat{2}-VQ_{m}^{(3)}\right)\left(2^{n+2}\widehat{2}-VQ_{n+1}^{(3)}\right)\\
-\left(2^{m+2}\widehat{2}-VQ_{m+1}^{(3)}\right)\left(2^{n+1}\widehat{2}-VQ_{n}^{(3)}\right)\end{array}
\right\rbrace\\
&=\frac{1}{49}\left\lbrace 
\begin{array}{ccc}
-2^{m+1}\widehat{2} VQ_{n+1}^{(3)}-2^{n+2}VQ_{m}^{(3)}\widehat{2}+2^{m+2}\widehat{2} VQ_{n}^{(3)}+2^{n+1}VQ_{m+1}^{(3)}\widehat{2}\\
+VQ_{m}^{(3)}VQ_{n+1}^{(3)}-VQ_{m+1}^{(3)}VQ_{n}^{(3)}
\end{array}
\right\rbrace\\
&=\frac{1}{7}\left(2^{m+1}\widehat{2} UQ_{n+1}^{(3)}-2^{n+1}UQ_{m+1}^{(3)}\widehat{2}-\frac{\sqrt{3}}{3}i\left(\omega_{1}^{m-n}\widehat{\omega_{1}}\widehat{\omega_{2}}-\omega_{2}^{m-n}\widehat{\omega_{2}}\widehat{\omega_{1}}\right)\right),
\end{aligned}
\end{equation}
where $UQ_{n}^{(3)}=jQ_{\alpha,\beta,n-1}^{(3)}-JQ_{\alpha,\beta,n+1}^{(3)}$ for all $n\geq 1$. 
\end{proof}

Taking $m=n+1$ in the Theorem \ref{th:5} and using the identity 
\begin{align*}
\omega_{1}\widehat{\omega_{1}}\widehat{\omega_{2}}&-\omega_{2}\widehat{\omega_{2}}\widehat{\omega_{1}}\\
&=(\omega_{1}-\omega_{2})\left\lbrace 
\begin{array}{ccc}(2-e_{1}-e_{2}+2e_{3})-(1+\alpha+\beta+\alpha\beta)\\
+(\beta e_{1}+\alpha e_{2} +e_{3})\end{array}
\right\rbrace.
\end{align*}
we obtain a type of Cassini-like identity for third-order Jacobsthal generalized quaternions.
\begin{corollary}\label{c1}
For any integer $n\geq 0$, we have
\begin{equation}\label{p11}
\left(JQ_{\alpha,\beta,n+1}^{(3)}\right)^{2}-JQ_{\alpha,\beta,n+2}^{(3)}\cdot JQ_{\alpha,\beta,n}^{(3)}=\frac{1}{7}\left\lbrace\begin{array}{ccc}2^{n+1}\left(2\widehat{2} UQ_{n+1}^{(3)}-UQ_{n+2}^{(3)}\widehat{2}\right)\\
+(2-e_{1}-e_{2}+2e_{3})\\
-(1+\alpha+\beta+\alpha\beta)\\
+(\beta e_{1}+\alpha e_{2} +e_{3})\end{array}
\right\rbrace.
\end{equation}
\end{corollary}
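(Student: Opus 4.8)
The plan is to specialize d'Ocagne's identity from Theorem~\ref{th:5} to $m=n+1$ and then simplify both sides. On the left, substituting $m=n+1$ into $JQ_{\alpha,\beta,m}^{(3)}\cdot JQ_{\alpha,\beta,n+1}^{(3)}-JQ_{\alpha,\beta,m+1}^{(3)}\cdot JQ_{\alpha,\beta,n}^{(3)}$ turns the first product into $\bigl(JQ_{\alpha,\beta,n+1}^{(3)}\bigr)^{2}$, so the left side of~(\ref{p10}) becomes exactly the Cassini-type expression $\bigl(JQ_{\alpha,\beta,n+1}^{(3)}\bigr)^{2}-JQ_{\alpha,\beta,n+2}^{(3)}\cdot JQ_{\alpha,\beta,n}^{(3)}$ appearing in~(\ref{p11}).

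For the right side I would track the three summands inside the braces of~(\ref{p10}) under $m=n+1$. Writing $2^{m+1}=2^{n+2}=2\cdot 2^{n+1}$ and $UQ_{m+1}^{(3)}=UQ_{n+2}^{(3)}$, the first two combine into $2^{n+1}\bigl(2\widehat{2}\,UQ_{n+1}^{(3)}-UQ_{n+2}^{(3)}\widehat{2}\bigr)$, which is the leading term in~(\ref{p11}). Since $\omega_{i}^{m-n}=\omega_{i}$, the third summand is $-\tfrac{\sqrt{3}}{3}\,i\bigl(\omega_{1}\widehat{\omega_{1}}\widehat{\omega_{2}}-\omega_{2}\widehat{\omega_{2}}\widehat{\omega_{1}}\bigr)$.

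The crux --- and the step I expect to be the main obstacle --- is the quaternion identity
$$\omega_{1}\widehat{\omega_{1}}\widehat{\omega_{2}}-\omega_{2}\widehat{\omega_{2}}\widehat{\omega_{1}}=(\omega_{1}-\omega_{2})\bigl\{(2-e_{1}-e_{2}+2e_{3})-(1+\alpha+\beta+\alpha\beta)+(\beta e_{1}+\alpha e_{2}+e_{3})\bigr\}.$$
This is a direct but bookkeeping-heavy computation: expand the generalized-quaternion products $\widehat{\omega_{1}}\widehat{\omega_{2}}$ and $\widehat{\omega_{2}}\widehat{\omega_{1}}$ via the multiplication table~(\ref{u}) (using $e_{1}^{2}=-\alpha$, $e_{2}^{2}=-\beta$, $e_{3}^{2}=-\alpha\beta$, $e_{1}e_{2}=e_{3}=-e_{2}e_{1}$, and so on), then reduce the resulting scalar coefficients with the cube-root relations $\omega_{1}\omega_{2}=1$, $\omega_{1}+\omega_{2}=-1$, $\omega_{1}^{2}=\omega_{2}$, $\omega_{2}^{2}=\omega_{1}$. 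Observe that swapping $\omega_{1}\leftrightarrow\omega_{2}$ exchanges $\widehat{\omega_{1}}$ with $\widehat{\omega_{2}}$, so the left-hand side is odd under this conjugation; hence each of its $e_{0},e_{1},e_{2},e_{3}$ components is a rational multiple of $\omega_{1}-\omega_{2}$, which is why the factorization comes out cleanly.

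Finally I would multiply through by $-\tfrac{\sqrt{3}}{3}\,i$ and use $\omega_{1}-\omega_{2}=\tfrac{-1+i\sqrt{3}}{2}-\tfrac{-1-i\sqrt{3}}{2}=i\sqrt{3}$, so that $-\tfrac{\sqrt{3}}{3}\,i(\omega_{1}-\omega_{2})=-\tfrac{\sqrt{3}}{3}\,i^{2}\sqrt{3}=1$. Consequently the third summand equals $(2-e_{1}-e_{2}+2e_{3})-(1+\alpha+\beta+\alpha\beta)+(\beta e_{1}+\alpha e_{2}+e_{3})$, and assembling the three pieces inside $\tfrac{1}{7}\{\cdots\}$ yields~(\ref{p11}).
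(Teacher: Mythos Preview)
Your proposal is correct and follows exactly the paper's approach: specialize Theorem~\ref{th:5} at $m=n+1$ and use the quaternion identity for $\omega_{1}\widehat{\omega_{1}}\widehat{\omega_{2}}-\omega_{2}\widehat{\omega_{2}}\widehat{\omega_{1}}$ (which the paper states without derivation just before the corollary). Your write-up is in fact more detailed than the paper's, since you spell out how to verify that identity via the multiplication table~(\ref{u}) and the relations among the cube roots of unity, and you make explicit the simplification $-\tfrac{\sqrt{3}}{3}\,i(\omega_{1}-\omega_{2})=1$.
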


\textbf{Special Cases:} In particular, for the third-order Jacobsthal and third-order Jacobsthal-Lucas generalized quaternions by considering the special cases of $\alpha$ and $\beta$, respectively, the Cassini-like Identities are as follows:
\begin{itemize}[noitemsep]
\item $\left(JQ_{1,1,n+1}^{(3)}\right)^{2}-JQ_{1,1,n+2}^{(3)}\cdot JQ_{1,1,n}^{(3)}=\frac{1}{7}\left\lbrace\begin{array}{ccc}2^{n+1}\left(2\widehat{2} UQ_{n+1}^{(3)}-UQ_{n+2}^{(3)}\widehat{2}\right)\\
-2+3e_{3}\end{array}
\right\rbrace,$
\item $\left(JQ_{1,-1,n+1}^{(3)}\right)^{2}-JQ_{1,-1,n+2}^{(3)}\cdot JQ_{1,-1,n}^{(3)}=\frac{1}{7}\left\lbrace\begin{array}{ccc}2^{n+1}\left(2\widehat{2} UQ_{n+1}^{(3)}-UQ_{n+2}^{(3)}\widehat{2}\right)\\
+2-2e_{2}+3e_{3}\end{array}
\right\rbrace,$
\item $\left(JQ_{1,0,n+1}^{(3)}\right)^{2}-JQ_{1,0,n+2}^{(3)}\cdot JQ_{1,0,n}^{(3)}=\frac{1}{7}\left\lbrace\begin{array}{ccc}2^{n+1}\left(2\widehat{2} UQ_{n+1}^{(3)}-UQ_{n+2}^{(3)}\widehat{2}\right)\\
-e_{1}+3e_{3}\end{array}
\right\rbrace,$
\item $\left(JQ_{-1,0,n+1}^{(3)}\right)^{2}-JQ_{-1,0,n+2}^{(3)}\cdot JQ_{-1,0,n}^{(3)}=\frac{1}{7}\left\lbrace\begin{array}{ccc}2^{n+1}\left(2\widehat{2} UQ_{n+1}^{(3)}-UQ_{n+2}^{(3)}\widehat{2}\right)\\
2-e_{1}-2e_{2}+3e_{3}\end{array}
\right\rbrace,$
\item $\left(JQ_{0,0,n+1}^{(3)}\right)^{2}-JQ_{0,0,n+2}^{(3)}\cdot JQ_{0,0,n}^{(3)}=\frac{1}{7}\left\lbrace\begin{array}{ccc}2^{n+1}\left(2\widehat{2} UQ_{n+1}^{(3)}-UQ_{n+2}^{(3)}\widehat{2}\right)\\
-1+3e_{3}\end{array}
\right\rbrace.$
\end{itemize}

\section{Conclusions}\label{sec:4}
\setcounter{equation}{0}

The third-order Jacobsthal generalized quaternions are given by
$$JQ_{\alpha,\beta,n}^{(3)}=J_{n}^{(3)}+e_{1}J_{n+1}^{(3)}+e_{2}J_{n+2}^{(3)}+e_{3}J_{n+3}^{(3)},$$
where $J_{n}^{(3)}$ is the $n$-th third-order Jacobsthal number and $e_{1}$, $e_{2}$ and $e_{3}$ are quaternionic units which satisfy the equalities
\begin{align*}
e_{1}^{2}&=-\alpha, \ e_{2}^{2}=-\beta, \ e_{3}^{2}=-\alpha\beta,\\
e_{1}e_{2}&=e_{3}=-e_{2}e_{1}, \ e_{2}e_{3}=\beta e_{1}=-e_{3}e_{2}, \ e_{3}e_{1}=\alpha e_{2}=-e_{1}e_{3},
\end{align*}
For $\alpha=\beta=1$, the third-order Jacobsthal generalized quaternion $JQ_{1,1,n}^{(3)}$ which was given by \cite{Cer} becomes the real third-order Jacobsthal quaternions. For $\alpha=1$ and $\beta=-1$, the third-order Jacobsthal generalized quaternion $JQ_{1,-1,n}^{(3)}$ becomes the split third-order Jacobsthal quaternion. Starting from ideas given by Horadam \cite{Hor1}, Pottman and Wallner \cite{Po-Wa}, the third-order Jacobsthal generalized quaternions are studied and the relations related to these quaternions are obtained (i.e., for third-order Jacobsthal semi-quaternions, split third-order Jacobsthal semi-quaternions and third-order Jacobsthal $\frac{1}{4}$-quaternions).

\medskip
\end{document}